\theoremstyle{plain}
  \newtheorem{thm}{Theorem}[section]
  \newtheorem{lem}[thm]{Lemma}
  \newtheorem{cor}[thm]{Corollary}
  \newtheorem{prop}[thm]{Proposition}
\theoremstyle{definition}
  \newtheorem{defn}[thm]{Definition}
  \newtheorem{ex}[thm]{Example}
\theoremstyle{remark}
  \newtheorem{rem}[thm]{Remark}
\renewcommand{\labelenumi}{(\theenumi)}
\numberwithin{equation}{section}
\DeclareMathOperator{\im}{im}
\DeclareMathOperator{\kernel}{ker}
\begin{document}
\title[The Ricci curvature on simplicial complexes]{The Ricci curvature on simplicial complexes}

\author[T. Yamada]{Taiki Yamada}
\address{Interdisciplinary Faculty of Science and Engineering, Shimane University, Shimane 690-8504, Japan}
\email{taiki\_yamada@riko.shimane-u.ac.jp}

\pagestyle{plain}

\footnote{
This work was supported by the Research Institute for Humanity and Nature (Director-General’s Discretionary Budge) and JSPS  KAKENHI(21K13800).\\
2010 \textit{Mathematics Subject Classification}.
Primary 05C50; Secondary 53C21 .\\
Keywords; Simplicial complex, Ollivier's Ricci curvature, Eigenvalue of Laplacian.
}

\maketitle
\begin{abstract}
We define the Ricci curvature on simplicial complexes by modifying the definition of the Ricci curvature on graphs, and we prove the upper and lower bounds of the Ricci curvature. These properties are generalizations of previous studies. Moreover, we obtain an estimate of the eigenvalues of the Laplacian on simplicial complexes using the Ricci curvature. 
\end{abstract}

\section{Introduction}
 The Ricci curvature plays an important role in Riemannian geometry when we investigate the global properties of a manifold. The Bonnet--Myers theorem and the Lichnerowicz theorem are good examples. Recently, Ollivier \cite{Ol1} generalized the Ricci curvature through a method now known as Ollivier's coarse Ricci curvature. By modifying this Ricci curvature, the Ricci curvature of a graph $G=(V, E)$ for two distinct vertices $x, y \in V$ is defined as
\begin{eqnarray*}
\kappa(x, y) = 1 - \cfrac{W(m_x, m_y)}{d(x, y)},
\end{eqnarray*}
where $m = \left\{m_x \right\}_{x \in V}$ is a random walk on $V$ and $W(m_x, m_y)$ is the $1$-Wasserstein distance between $m_x$ and $m_y$. In 2010, Lin, Lu, and Yau \cite{Yau1} defined the Ricci curvature of an undirected graph by using lazy random walk, and they studied the Ricci curvature of the product space of graphs and random graphs. In 2012, Jost and Liu \cite{Jo2} defined the Ricci curvature on undirected graphs by using a simple random walk; therefore, their definition is simpler than Lin-Lu-Yau's definition. In 2019, we first introduced the notion of the Ricci curvature of directed graphs \cite{Y1}. We proved some global properties of directed graphs, but we had to assume a slightly strong condition. To solve this problem, in 2020, Eidi-Jost \cite{Ei} defined the Ricci curvature on directed hypergraphs. Recently, we studied the transportation inequality along the heat flow and the concentration of measure inequality for directed graphs \cite{YSO1}. In contrast, the coarse Ricci curvature on graphs has been applied to many fields, such as cancer networks \cite{Tan} and Internet topology \cite{Ni}. In this paper, we proved the upper and lower bounds of the Ricci curvature on simplicial complex modification \cite{Jo2}.
\begin{thm}\label{lower}
For $i$-faces $F$ and $F'$ with $F \sim F'$, if the weights of the facets are equal to 1, then we have
\begin{eqnarray*}
\kappa (F, F') &\geq& - \cfrac{1}{i+1} \left( 1 - \cfrac{1}{\deg F} - \cfrac{1}{\deg F'} - \cfrac{i \# \Gamma (F, F')}{\deg F'} - \cfrac{\# \Gamma (F, F')}{\deg F \wedge \deg F'}  \right)_+ \\
&\ & -\cfrac{1}{i+1} \left( 1 - \cfrac{1}{\deg F} - \cfrac{1}{\deg F'} - \cfrac{i \# \Gamma (F, F')}{\deg F'} - \cfrac{\# \Gamma (F, F')}{\deg F \vee \deg F'}  \right)_+\\
&\ & +\cfrac{\# \Gamma (F, F')}{(i+1)(\deg F \vee \deg F')}
\end{eqnarray*}
where $\deg F \wedge \deg F = \min \left\{ \deg F, \deg F' \right\}$, $\deg F \vee \deg F = \max \left\{ \deg F, \deg F' \right\}$, and for $r \in \mathbb{R}$, $(r)_+ = r$ if $r>0$; otherwise, it is 0.
\end{thm}

The graph Laplacian is one of the most important concepts for obtaining the analytic properties of directed graphs. In 1847, Kirchhoff \cite{Kir} first defined the graph Laplacian on real-valued functions. Subsequently, many studies have investigated the graph Laplacian and its spectrum. There are several definitions of the graph Laplacian in addition to the Kirchhoff graph Laplacian. One of the most famous definitions is the normalized graph Laplacian defined by Bottema \cite{Bo}. The second is the {\em higher-order combinatorial Laplacian} defined by Eckmann \cite{Eck}. This Laplacian is defined on simplicial complexes, not graphs, and is based on a discrete version of the Hodge theory. In 2013, by modifying the higher-order combinatorial Laplacian, Jost and Horak \cite{Ho} developed a general framework for Laplacians defined by the combinatorial structure of a simplicial complex. This Laplacian is called the {\em $i$-Laplacian} (see Definition \ref{i-laplacian}). If we apply several weight functions to an inner product as given in Definition \ref{innerdef}, then the $i$-Laplacian with this weight covers every graph Laplacian obtained previously, so the $i$-Laplacian is the best definition of the graph Laplacian. Additionally, one of the most important properties of the $i$-Laplacian is that its nonzero spectrum on vertices coincides with that on edges (see (2.6) in \cite{Ho}). Hence, in this study, we used the $i$-Laplacian.

There is a good relationship between the coarse Ricci curvature and the Laplacian graph. In 2009, Ollivier \cite{Ol1} estimated the first non-zero eigenvalue of the normalized graph Laplacian by a lower bound of the coarse Ricci curvature.
\begin{thm}[Ollivier, Proposition 30 in \cite{Ol1}]
\label{Ol}
Let $\mu$ be the non-zero eigenvalue of the normalized graph Laplacian $\Delta_{0}$. Suppose that $\kappa(x, y) \geq k$ for any $(x, y) \in E$ and for a positive real number $k$. Then, we have
	\begin{eqnarray*}
	k \leq \mu \leq 2 - k,
	\end{eqnarray*}
	where $\deg u$ is defined in Definition \ref{degree}, and the normalized graph Laplacian $\Delta_{0}$ is defined as 
	\begin{eqnarray*}
	\Delta_{0} f (u)= \cfrac{1}{\deg u} \sum_{(v, u) \in E} ( f(u) - f(v) )
	\end{eqnarray*}
	for any vertex $u \in V$ and for a function $f: V \to \mathbb{R}$.
\end{thm}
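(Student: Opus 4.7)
The plan is to interpret $\Delta_0$ through the averaging (Markov) operator $Mf(u) = \frac{1}{d_u}\sum_{(v,u)\in E} f(v)$ attached to the simple random walk that defines the measures $m_x$, so that $\Delta_0 = I - M$. Under this identification, an eigenfunction $f$ of $\Delta_0$ with eigenvalue $\lambda$ satisfies $Mf = (1-\lambda)f$, which converts the spectral problem into a contraction question about $M$ with respect to the Lipschitz seminorm, and this is precisely what the coarse Ricci curvature controls.

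The central step is Kantorovich--Rubinstein duality: for any function $g$,
\begin{equation*}
\left| \int g\, dm_x - \int g\, dm_y \right| \leq \mathrm{Lip}(g) \cdot W_{1}(m_x, m_y).
\end{equation*}
Applying this to the eigenfunction $f$ (after dividing by $\mathrm{Lip}(f)$, which is positive since $f$ is non-constant as $\lambda \neq 0$), and noting that $Mf(u) = \int f\, dm_u$, we obtain for any adjacent pair $(x,y) \in E$
\begin{equation*}
|Mf(x) - Mf(y)| \leq \mathrm{Lip}(f)\cdot W_1(m_x,m_y) \leq \mathrm{Lip}(f)\cdot (1-k),
\end{equation*}
where the last inequality uses $d(x,y)=1$ together with the curvature hypothesis $\kappa(x,y) \geq k$. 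Substituting $Mf = (1-\lambda)f$ yields $|1-\lambda|\cdot|f(x)-f(y)| \leq (1-k)\mathrm{Lip}(f)$, and taking the supremum over edges gives $|1-\lambda|\cdot \mathrm{Lip}(f) \leq (1-k)\mathrm{Lip}(f)$, i.e.\ $|1-\lambda|\leq 1-k$, which is exactly the desired two-sided bound $k \leq \lambda \leq 2-k$.

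To obtain the lower bound for the \emph{first non-zero} eigenvalue specifically, I would choose an eigenfunction $f$ associated with $\lambda_1$ that is orthogonal to constants, so that the Lipschitz extremum is realized on a pair of distinct vertices and the argument above is non-vacuous. The upper bound $\lambda_1\leq 2-k$ is a bonus from the same inequality (reflecting near-bipartite behavior).

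The main obstacle I anticipate is notational/normalization rather than conceptual: one must verify that the Laplacian as written in the statement (a sum over incoming edges, without an explicit $1/d_u$) truly equals $I-M$ for the intended simple-random-walk measures $m_x$, or absorb the degree weight into the correct inner product on $C^0(V)$. Once $\Delta_0$ and $M$ are aligned, everything reduces to the one-line contraction $\mathrm{Lip}(Mf)\leq (1-k)\mathrm{Lip}(f)$ coming from Ollivier's definition of $\kappa$, and the spectral estimate is immediate.
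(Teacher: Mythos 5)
Your proposal is correct and is essentially the standard argument (Ollivier's own, and the same mechanism the paper uses for its Theorem \ref{thm:estimate}): identify $\Delta_0=I-M$, use Kantorovich--Rubinstein duality to get the contraction $\mathrm{Lip}(Mf)\leq(1-k)\mathrm{Lip}(f)$ from $W_1(m_x,m_y)\leq 1-k$ on edges, and read off $|1-\lambda|\leq 1-k$ for any non-constant eigenfunction. The only points to make explicit are that for the graph metric the Lipschitz seminorm is attained on an edge (so taking the supremum over edges really recovers $\mathrm{Lip}(f)$) and that non-constancy of $f$ follows from $\lambda\neq 0$ on a connected graph; orthogonality to constants is not needed.
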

In 2009, Bauer et al. \cite{Jo0} obtained an estimate of the eigenvalues of the normalized graph Laplacian on neighborhood graphs using the Ricci curvature. For simplicial complexes, we \cite{Yamada3} obtained an estimate of the Laplacian using the Ricci curvature. However, we considered only one-dimensional simplicial complexes and assumed many conditions, so the setting of our main result in this paper is more general, and the result is an improvement over our previous result \cite{Yamada3}. This statement is as follows:

\begin{thm}\label{thm:estimate}
Let $K$ be an orientable $(i+1)$-dimensional simplicial complex and $\lambda$ be the eigenvalue of $\Delta^\mathrm{up}_i$. Assume that $\kappa (F, F') \geq k$ for any $F \sim F'$ and for a real number $k$. Then, we have
\begin{eqnarray*}
(i+1)k - i \leq \lambda \leq (i+2) - (i+1) k.
\end{eqnarray*}
\end{thm}
Because a graph is a 1-dimensional simplicial complex, this result expands Theorem \ref{Ol} to a simplicial complex. 

In this paper, we refer to the definition of the Laplacian on simplicial complexes and define the Ricci curvature on simplicial complexes (\S2). In \S3, we prove the upper and lower bounds of the Ricci curvature. In \S4, we prove Theorem \ref{thm:estimate}. In \S5, we present some examples of our results. Finally, we summarize this paper and describe a future plan (\S6).

\section{Preliminaries}
\subsection{Laplacian on simplicial complexes}
 In this section, we present several definitions of simplicial complexes, including the Laplacian and the Ricci curvature of simplicial complexes. An {\em abstract simplicial complex} $K$ is a collection of subsets of a finite set $V$ that is closed under inclusion. An {\em $i$-face} of $K$ is an element of cardinality $i+1$, and $S_{i}(K)$ denotes the set of all $i$-faces of $K$. A face $F$ is said to be {\em oriented} if we choose an ordering on its vertices, and an oriented face $F$ is denoted by $[F]$. The faces that are maximal under inclusion are called {\em facets}, and a simplicial complex $K$ is said to be {\em pure} if all facets have the same dimensions. For any $i$-face $F$, the {\em dimension} of $F$ is $i$, and the dimension of $K$ is the maximum dimension of the faces in $K$. The $i$-th chain group $C_{i}(K, \mathbb{R})$ of $K$ with coefficients in $\mathbb{R}$ is a vector space over the real field $\mathbb{R}$ with basis $B_{i}(K, \mathbb{R})=\left\{ [F] \mid F \in S_{i}(K) \right\}$, and the $i$-th co-chain group $C^{i}(K, \mathbb{R})$ is defined as the dual of the $i$-th chain group.
	\begin{defn}
	For the cochain groups, the {\em simplicial coboundary maps} \\ $\delta_{i} : C^{i}(K, \mathbb{R}) \to C^{i+1}(K, \mathbb{R})$, $i \geq -1$, are defined by
	\begin{eqnarray*}
(\delta_{i} f)([v_{0}, \cdots, v_{i + 1}]) = \sum_{j = 0}^{i + 1} (- 1)^{j} f([v_{0}, \cdots, \hat{v_{j}}, \cdots, v_{i + 1}]).
	\end{eqnarray*}
	for $f \in C^{i}(K, \mathbb{R})$, where $\hat{v_{j}}$ implies that vertex $v_{j}$ has been removed.
	\end{defn}
Note that the one-dimensional vector space $C^{-1}(K, \mathbb{R})$ is generated by a function $f_{-1}$ with $f_{-1} (\emptyset)=1$.
To define the Laplacian, we define the boundary between the oriented face and the inner product.
	\begin{defn}
	Let $[F']=[v_{0}, \cdots, v_{i+1}]$ be an oriented $(i+1)$-face of $K$ and\\ $[F_j] = [v_{0}, \cdots, \hat{v_{j}}, \cdots, v_{i+1}]$ be the oriented $i$-face of $F'$. The {\em boundary of the oriented face} $[F']$ is defined as
		\begin{eqnarray*}
 		\partial [F'] = \sum_{j=0}^{i+1} (-1)^{j}[F_j],
		\end{eqnarray*}
	and the sign $(-1)^{j}$ of $[F_j]$ at the boundary of $[F']$ is denoted by sgn($[F_j], \partial [F']$).
	\end{defn}
	\begin{defn}
	\label{innerdef}
	An {\em inner product} on space $C^{i}(K, \mathbb{R})$ is defined as follows:
		\begin{eqnarray}
		\label{inner}
		(f, g)_{C^{i}} = \sum_{F \in S_{i}(K)} w(F) f([F]) g([F])
		\end{eqnarray}
	for $f$, $g \in C^{i}(K, \mathbb{R})$, where $w: \bigcup_{i = 0} S_{i}(K) \to \mathbb{R}^{+}$ is a function with $w(\emptyset) = 0$. We call $w$ the {\em weight function} on $K$.
	\end{defn}
For the inner product on $C^{i}(K, \mathbb{R})$, the {\em adjoint operator} $\delta^{*}_{i} : C^{i + 1}(K, \mathbb{R}) \to C^{i}(K, \mathbb{R})$ of the coboundary operator $\delta_{i}$ is defined as
	\begin{eqnarray*}
	(\delta_{i} f_{1}, f_{2})_{C^{i+1}} = ( f_{1},\delta^{*}_{i} f_{2})_{C^{i}}
	\end{eqnarray*}
for $f_{1} \in C^{i}(K, \mathbb{R})$ and $f_{2} \in C^{i + 1}(K, \mathbb{R})$, respectively.
	\begin{defn}[Horak-Jost \cite{Ho}]
	\label{i-laplacian}
	We define the Laplacian on $C^{i}(K, \mathbb{R})$ as follows.
		\begin{enumerate}
		\renewcommand{\labelenumi}{{\rm (\arabic{enumi})}}

		\item The $i$-{\em dimensional combinatorial up Laplacian} or simply the {\em $i$-up Laplacian} is defined by
 			\begin{eqnarray*}
  			\mathcal{L}^{\mathrm{up}}_{i}(K) := \delta^{*}_{i} \delta_{i}.
 			\end{eqnarray*}
		\item The {\em $i$-dimensional combinatorial down Laplacian} or simply the {\em $i$-down Laplacian} is defined by
 			\begin{eqnarray*}
  			\mathcal{L}^{\mathrm{down}}_{i}(K) := \delta_{i-1} \delta^{*}_{i-1}.
 			\end{eqnarray*}
		\item The {\em $i$-dimensional combinatorial Laplacian} or simply the {\em $i$-Laplacian} is defined by
 			\begin{eqnarray*}
  			\mathcal{L}_{i}(K) := \mathcal{L}^{\mathrm{up}}_{i}(K) + \mathcal{L}^{\mathrm{down}}_{i}(K) =  \delta^{*}_{i} \delta_{i} + \delta_{i-1} \delta^{*}_{i-1}.
 			\end{eqnarray*}
		\end{enumerate}
	\end{defn}
	For any function $f$ and any $i$-face $F$, the $i$-up Laplacian and the $i$-down Laplacian are given by
	\begin{eqnarray*}
	(\mathcal{L}^{\mathrm{up}}_{i}f)([F]) = \sum_{\bar{F} \in S_{i+1} : F \in \partial \bar{F}} \left\{ \cfrac{w(\bar{F})}{w(F)}f([F]) + \sum_{\substack{F \not\neq F' \in S_{i}: \\ F, F'  \in \partial \bar{F}} } \mathrm{sgn}([F], \partial [\bar{F}]) \mathrm{sgn}([F'], \partial [\bar{F}]) \cfrac{w(\bar{F})}{w(F)} f([F']) \right\}
	\end{eqnarray*}
	and	
	\begin{eqnarray*}
	(\mathcal{L}^{\mathrm{down}}_{i}f)([F]) = \sum_{E \in \partial F} \left\{ \cfrac{w(F)}{w(E)}f([F]) + \sum_{\substack{F' \in S_{i}: \\E= F \cap F'}} \mathrm{sgn}([E], \partial [F]) \mathrm{sgn}([E], \partial [F']) \cfrac{w(F')}{w(E)} f([F']) \right\}.
	\end{eqnarray*}
	As these operators are all self-adjoint and non-negative, the eigenvalues are real and non-negative (see \cite{Ho}).
	\begin{rem}
	\label{eigenvalue}
	Since $\delta_{i} \delta_{i-1} = 0$ and $\delta^{*}_{i-1} \delta^{*}_{i} = 0$, we have $\im \mathcal{L}^{\mathrm{down}}_{i} \subset \kernel \mathcal{L}^{\mathrm{up}}_{i}$ and $\im \mathcal{L}^{\mathrm{up}}_{i} \subset \kernel \mathcal{L}^{\mathrm{down}}_{i}$, where $\im D$ denotes the image and $\kernel D$ denotes the kernel of an operator $D$. Thus, $\lambda$ is a non-zero eigenvalue of $\mathcal{L}_{i}$ if and only if it is a nonzero eigenvalue of either $\mathcal{L}^{\mathrm{up}}_{i}$ or $\mathcal{L}^{\mathrm{down}}_{i}$.
	\end{rem}
We represent the Laplacians in a matrix form. Let $D_{i}$ be the matrix corresponding to operator $\delta_{i}$ and $W_{i}$ be the diagonal matrix representing their scalar product on $C^{i}$. Then, operators $\mathcal{L}^{\mathrm{up}}_{i}(K)$ and $\mathcal{L}^{\mathrm{down}}_{i}(K)$ are expressed as
	\begin{eqnarray*}
	\mathcal{L}^{\mathrm{up}}_{i}(K) = W_{i}^{-1}D_{i}^{T}W_{i+1}D_{i},\\
	\mathcal{L}^{\mathrm{down}}_{i}(K) = D_{i-1}W_{i-1}^{-1}D_{i-1}^{T}W_{i},
	\end{eqnarray*}
where $A^T$ is the transpose of matrix $A$. Using these matrix forms, $\lambda$ is a non-zero eigenvalue of $\mathcal{L}^{\mathrm{up}}_{i}(G)$ if and only if it is a nonzero eigenvalue of $\mathcal{L}^{\mathrm{down}}_{i+1}(G)$.

\begin{defn}\label{degree}
The degree of an $i$-face $F$ is defined by
\begin{eqnarray*}
\deg F = \sum_{\overline{F} \in S_{i+1}(K): F \in \partial \overline{F}} w (\overline{F}).
\end{eqnarray*}
\end{defn}
	\begin{rem}
	\label{normalize}
	If, for every face $F$ of simplicial complex $K$ that is not a facet, we take $w(F) = \deg F$ and the weights of the facets are equal to 1, then the obtained operators are denoted by $\Delta_i^\mathrm{up}$ and $\Delta_i^\mathrm{down}$. In particular, $\Delta_0^\mathrm{up}$ corresponds to the normalized graph Laplacian $\Delta_0$.
	\end{rem}
	
\subsection{Ricci curvature on simplicial complexes}
Hereafter, we consider an abstract simplicial complex $K$ with more than one dimension. We note that the case of one-dimensional simplicial complexes corresponds to the case of undirected graphs. To consider the Ricci curvature on a simplicial complex, we define the distance of a simplicial complex.

\begin{defn}
	\begin{enumerate}
  	\renewcommand{\labelenumi}{(\arabic{enumi})}
	\item Any two $i$-faces $F$ and $F'$ are {\em connected}, denoted by $F \sim F'$, if there exists an $(i+1)$-face $\bar{F}$ such that $F, F' \in \partial \bar{F}$. We denote the {\em $i$-neighborhood of $F$} and {\em $(i+1)$-neighborhood of $F$} by $\Gamma(F)$ and $\Gamma^\mathrm{up}(F)$, respectively; that is, $\Gamma(F) = \left\{ F' \in S_i (K) \mid F' \sim F \right\}$ and $\Gamma^\mathrm{up}(F) = \left\{ \bar{F} \in S_{i+1} (K) \mid F \in \partial \bar{F} \right\}$.
	\item A {\em path} from face $F$ to face $F'$ is a sequence of connected faces $\left\{F_{i-1} \sim F_{i} \right\}_{i=1}^{n}$, where $F_{0} = F$, $F_{n} = F'$. We call $n$ the {\em length} of the path.
   	\item The {\em distance} $d(F, F')$ between two faces $F$ and $F'$ is given by the length of the shortest path from $F$ to $F'$.
	\item $K$ is said to be {\em $(i+1)$-path connected} if any two $i$-faces can be connected by a path.
   	\end{enumerate}
\end{defn}

Next, we define the 1-Wasserstein distance between probability measures.

\begin{defn}
The 1-Wasserstein distance between any two probability measures $\mu$ and $\nu$ on $V$ is given by
	\begin{eqnarray*}
	W(\mu, \nu) = \inf_{A} \sum_{u, v \in V}A(u, v)d(u, v),
	\end{eqnarray*}
	where $A : V \times V \to [0, 1]$ runs over all maps, satisfying
	\begin{eqnarray}
	\label{coupling}
		\begin{cases}
		\sum_{v \in V}A(u, v) = \mu(u),\\
		\sum_{u \in V}A(u, v) = \nu(v).
		\end{cases}
	\end{eqnarray}
Such a map $A$ is called a {\em coupling} between $\mu$ and $\nu$.
\end{defn}
	\begin{rem}
	There exists a coupling $A$ that attains the $1$-Wasserstein distance (see \cite{Le}, \cite{Vi1}, and \cite{Vi2}), which we call an {\em optimal coupling}.
	\end{rem}
	
   One of the most important properties of the $1$-Wasserstein distance is the Kantorovich--Rubinstein duality.
	\begin{prop}[Kantorovich, Rubinstein]\label{kantoro}
The $1$-Wasserstein distance between any two probability measures $\mu$ and $\nu$ on $V$ is written as
		\begin{eqnarray*}
		W(\mu, \nu) = \sup_{f} \sum_{u \in V} f(u)(\mu(u) - \nu(u)),
		\end{eqnarray*}
	where the supremum is taken over all functions $f$ on $V$ that satisfy $|f(u) - f(v)| \leq d(u, v)$ for any $u, v \in V$. A function $f$ on $V$ is said to be {\em $1$-Lipschitz} if $|f(u)-f(v)| \leq d(u,v)$ for any $u, v \in V$.
	\end{prop}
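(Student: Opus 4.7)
The plan is to prove the two inequalities separately, following the standard linear programming duality proof of Kantorovich--Rubinstein.

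The easy direction $W(\mu,\nu) \geq \sup_f \sum_{u} f(u)(\mu(u)-\nu(u))$ is a one-line computation: for any coupling $A$ and any $1$-Lipschitz $f$, combining the bound $f(u)-f(v) \leq d(u,v)$ with the marginal constraints \eqref{coupling} gives
\[
\sum_{u,v \in V} A(u,v) d(u,v) \geq \sum_{u,v \in V} A(u,v)(f(u)-f(v)) = \sum_{u \in V} f(u)(\mu(u)-\nu(u)),
\]
after which one takes $\inf_A$ on the left and $\sup_f$ on the right.

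For the reverse inequality I would view $W(\mu,\nu)$ as a finite-dimensional linear program and invoke LP duality. The primal minimizes $\sum_{u,v} A(u,v)d(u,v)$ over couplings, and its dual maximizes $\sum_{u} f(u)\mu(u) - \sum_{v} g(v)\nu(v)$ over pairs of functions $(f,g)$ with $f(u)-g(v) \leq d(u,v)$. Strong duality applies because $V$ is finite and the primal is bounded and feasible (take $A(u,v)=\mu(u)\nu(v)$), so the two optimal values coincide.

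The main obstacle, and the only step that really uses the metric structure, is to replace the dual pair $(f,g)$ by a single $1$-Lipschitz function. The standard device is the $c$-transform $\tilde{f}(u) := \min_{v \in V}\{d(u,v)+g(v)\}$. A short verification shows: the triangle inequality for $d$ forces $|\tilde f(u)-\tilde f(u')| \leq d(u,u')$, so $\tilde f$ is $1$-Lipschitz; the dual constraint $f(u) \leq d(u,v)+g(v)$ gives $\tilde f \geq f$ pointwise; and choosing $v=u$ in the infimum gives $\tilde f \leq g$ pointwise. Hence replacing $(f,g)$ by $(\tilde f,\tilde f)$ can only increase the dual objective $\sum_u f(u)\mu(u)-\sum_v g(v)\nu(v)$, so the supremum is attained on pairs with $f=g$ equal to a $1$-Lipschitz function, which is exactly the expression in the statement.
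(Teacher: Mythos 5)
Your proposal is correct. The paper itself gives no proof of Proposition \ref{kantoro}; it simply cites the standard references (\cite{Le}, \cite{Vi1}, \cite{Vi2}), and your argument is precisely the standard finite-dimensional proof found there: the one-line weak-duality inequality, strong LP duality for the feasible bounded primal, and the $c$-transform $\tilde f(u)=\min_v\{d(u,v)+g(v)\}$ to collapse the dual pair $(f,g)$ to a single $1$-Lipschitz function. All three verifications (Lipschitz via the triangle inequality, $\tilde f\geq f$ from the dual constraint, $\tilde f\leq g$ by taking $v=u$) are sound.
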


In this paper, we use the following probability measure on $S_i (K)$.
\begin{defn}
For any $i$-face $F \in S_i (K)$, we define a probability measure $m_{F}$ on $S_i (K)$ as
	\begin{eqnarray*}
    	m_{F}(F') =
    		\begin{cases}
    		\cfrac{w(\bar{F})}{(i + 1) \deg F}, &\text{if there exists $\bar{F} \in S_{i+1}(K)$ such that $F, F' \in \partial \bar{F}$}, \\
    		0, & \mathrm{\mathrm{otherwise}}.
    		\end{cases}
   	\end{eqnarray*}
\end{defn}
We note that a probability measure on $S_0 (K)$ corresponds to a simple random walk.
  
Using this probability measure, we define the Ricci curvature.
\begin{defn}\label{Ricci}
For any two distinct $i$-faces $F$ and $F'$, the {\em Ricci curvature} of $F$ and $F'$ is defined as follows:
   	\begin{eqnarray*}
    	\kappa(F, F') =  1 - \cfrac{W(m_{F},m_{F'})}{d(F, F')}.
   	\end{eqnarray*}
\end{defn}

Clearly, we prove the following lemma, and thus, we obtain an upper bound of the Ricci curvature.
\begin{lem}\label{bounded}
For any two distinct $i$-faces $F$ and $F'$, we have
\begin{eqnarray*}
\kappa(F, F') \leq \cfrac{2}{d (F, F')}.
\end{eqnarray*}
\end{lem}
\begin{proof}
We define a delta function $\delta_F$. By the triangle inequality, we have
\begin{eqnarray*}
W(m_{F}, m_{F'}) &\geq& - W(\delta_F, m_{F}),  + W(\delta_F, \delta_F') - W(m_{F'}, \delta_F')\\
&=& - 1 + d(F, F')  - 1,
\end{eqnarray*}
which implies that
\begin{eqnarray*}
d(F,F') - W(m_{F}, m_{F'}) \leq 2.
\end{eqnarray*}
This completes the proof.
\end{proof}

 \section{Properties of the Ricci curvature on simplicial complexes}
 
 \subsection{Upper and lower bounds of the Ricci curvature}
In the case of graphs, Jost-Liu proved a relation between the Ricci curvature and the local clustering coefficient \cite{Jo2}. In this subsection, we extend these properties to the case of simplicial complexes. To do so, we must prepare some notations (see Figure \ref{fig:nbd}).
\begin{eqnarray*}
\Gamma (F, F') &:=& \Gamma (F) \cap \Gamma (F')\\
\Gamma^{\mathrm{up}}(F, \Gamma(F, F')) &:=& \left\{\bar{F} \in \Gamma^{\mathrm{up}}(F) \mid \text{there exists $F'' \in \Gamma(F, F')$ such that $F, F'' \in \partial \bar{F}$} \right\}\\
\Gamma (F, \Gamma(F, F')) &:=& \left\{F'' \in \Gamma(F) \setminus  \Gamma (F, F') \mid \text{there exists $\bar{F} \in \Gamma^{\mathrm{up}}(F, \Gamma(F, F'))$ such that $F'' \in \partial \bar{F}$} \right\}
\end{eqnarray*}

\begin{figure}[h]
  \begin{center}
          \includegraphics[scale=0.4]{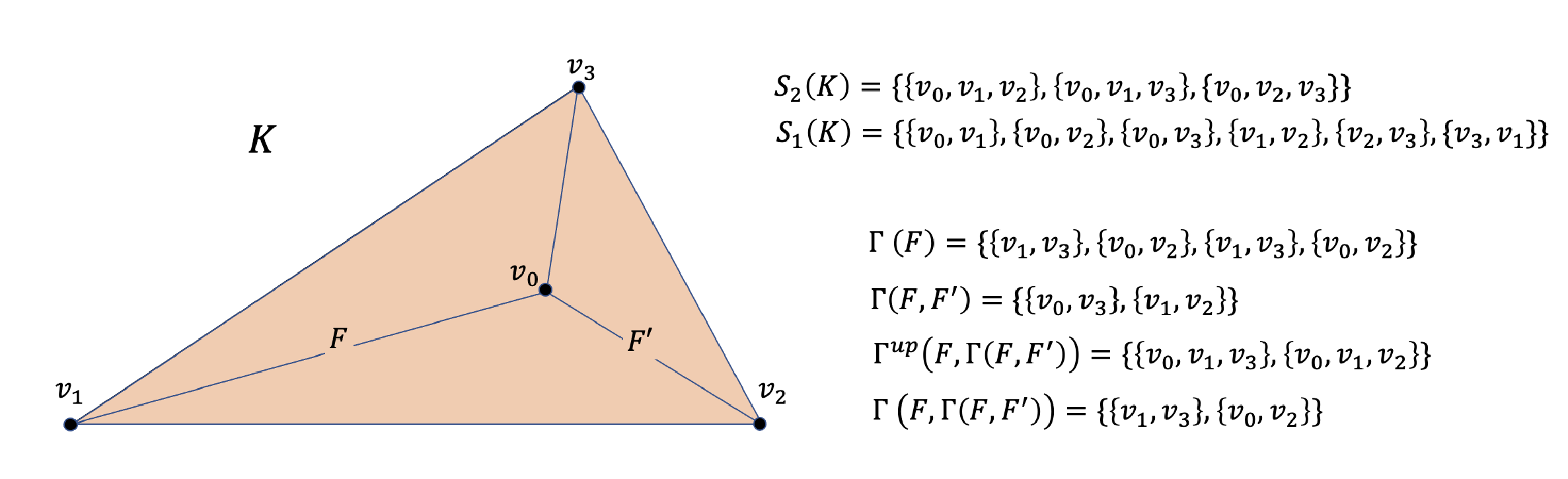}
    \caption{Examples of neighborhoods on a simplicial complex}
    \label{fig:nbd}
  \end{center}
\end{figure}

Before we prove Theorem \ref{lower}, we present the statement again:\\
{\bf Theorem \ref{lower}} For $i$-faces $F$ and $F'$ with $F \sim F'$, if the weights of the facets are equal to 1, then we have
\begin{eqnarray*}
\kappa (F, F') &\geq& - \cfrac{1}{i+1} \left( 1 - \cfrac{1}{\deg F} - \cfrac{1}{\deg F'} - \cfrac{i \# \Gamma (F, F')}{\deg F \vee \deg F'} - \cfrac{\# \Gamma (F, F')}{\deg F \wedge \deg F'}  \right)_+ \\
&\ & -\cfrac{1}{i+1} \left( 1 - \cfrac{1}{\deg F} - \cfrac{1}{\deg F'} - \cfrac{i \# \Gamma (F, F')}{\deg F \vee \deg F'} - \cfrac{\# \Gamma (F, F')}{\deg F \vee \deg F'}  \right)_+\\
&\ & +\cfrac{\# \Gamma (F, F')}{(i+1)(\deg F \vee \deg F')}
\end{eqnarray*}
where $\deg F \wedge \deg F = \min \left\{ \deg F, \deg F' \right\}$, $\deg F \vee \deg F = \max \left\{ \deg F, \deg F' \right\}$, and for $r \in \mathbb{R}$, $(r)_+ = r$ if $r>0$; otherwise, it is 0.
\begin{rem}
For $i = 0$, this statement corresponds to Theorem 3 in \cite{Jo2}. In a general simplicial complex, we need to consider the elements of $\Gamma (F, \Gamma(F, F'))$, which complicates the proof.
\end{rem}
In the proof, we use the following notations:\\
For a real number $R \in \mathbb{R}$ and set $S$, $Al (R, S) := R/|S|$.
\begin{proof}[Proof of Theorem \ref{lower}.]
Without loss of generality, we suppose that
\begin{eqnarray*}
\deg F \wedge \deg F = \deg F,\ \deg F \vee \deg F = \deg F'.
\end{eqnarray*}
We consider a coupling between $m_{F}$ and $m_{F'}$. For simplicity, we define 
\begin{eqnarray*}
\Gamma^{\mathrm{own}} (F) &=& \Gamma (F) \setminus (\Gamma (F, \Gamma(F, F')) \cup \Gamma (F, F') \cup \left\{ F' \right\}),\\
\Gamma^{\mathrm{own}} (F') &=& \Gamma (F') \setminus (\Gamma (F', \Gamma(F, F')) \cup \Gamma (F, F') \cup \left\{ F \right\}).
\end{eqnarray*}
To refer the transfer plan of faces, we define
\begin{eqnarray*}
A := 1 - \cfrac{1}{\deg F} - \cfrac{1}{\deg F'} - \cfrac{i \# \Gamma (F, F')}{\deg F'} - \cfrac{\# \Gamma (F, F')}{\deg F},\\
B:= 1 - \cfrac{1}{\deg F} - \cfrac{1}{\deg F'} - \cfrac{i \# \Gamma (F, F')}{\deg F'} - \cfrac{\# \Gamma (F, F')}{\deg F'}.
\end{eqnarray*}
Subsequently, we obtain $A \leq B$. In addition, because $|\Gamma (F, \Gamma(F, F'))| = |\Gamma (F', \Gamma(F, F'))|$, a bijective map $\phi$ exists between $\Gamma (F, \Gamma(F, F'))$ and $\Gamma (F', \Gamma(F, F'))$.

{\bf Case 1.}  $0 \leq A \leq B$.

Our transfer plan for moving $m_{F}$ to $m_{F'}$ should be as follows:
	\begin{enumerate}
	\item Move the mass of $1/((i+1)\deg F)$ from $F'$ to $i$-faces in $\Gamma^{\mathrm{own}} (F')$. This is well defined because we have $0 \leq B$.
	\item Move the mass of $1 / ((i+1)\deg F')$ to itself at $i$-face in $\Gamma (F, F')$, and move the rest $1 / ((i+1)\deg F) - 1 / ((i+1)\deg F')$ from an $i$-face in $\Gamma (F, F')$ to $i$-faces in $\Gamma^{\mathrm{own}} (F')$.
	\item Move the mass of $1 / ((i+1)\deg F')$ from a face $F \in \Gamma (F, \Gamma(F, F'))$ to $\phi (F) \in \Gamma (F', \Gamma(F, F'))$.
	\item Move the mass of $1/((i+1)\deg F')$ at $i$-faces in $\Gamma^{\mathrm{own}} (F)$, and the rest at $i$-faces in $\Gamma (F, \Gamma(F, F'))$ to $F$. This is well defined because we have 
	\begin{eqnarray*}
	\cfrac{1}{\deg F'} \leq 1 - \cfrac{1}{\deg F} - \cfrac{\# \Gamma (F, F')}{\deg F} - \cfrac{i \# \Gamma (F, F')}{\deg F} + \left( \cfrac{i \# \Gamma (F, F')}{\deg F} - \cfrac{i \# \Gamma (F, F')}{\deg F'} \right),
	\end{eqnarray*}
	which implies $0 \leq A$.
	\item The rest of (4) is
	\begin{eqnarray*}
	1 - \cfrac{1}{\deg F} - \cfrac{1}{\deg F'} - \cfrac{i \# \Gamma (F, F')}{\deg F} - \cfrac{\# \Gamma (F, F')}{\deg F}.
	\end{eqnarray*}
	This is performed in $\Gamma (F, \Gamma(F, F'))$ and $\Gamma^{\mathrm{own}} (F')$ to $i$-faces in $\Gamma^{\mathrm{own}} (F')$.
	\end{enumerate}
Then, by using this transfer plan and calculating the 1-Wasserstein distance between $m_{F}$ and $m_{F'}$, we obtain
\begin{eqnarray*}
(i + 1)W(m_{F}, m_{F'}) &\leq& \cfrac{1}{\deg F} \times 1 + \left( \cfrac{1}{\deg F}  - \ frac {1}{\deg F'}  \right) \times \# \Gamma (F, F') \times 2 \hspace{\fill} \textit{... (1) and (2)}, \\
&\ & + \cfrac{1}{\deg F'} \times i \# \Gamma (F, F') \times 1 \hspace{\fill} + \cfrac{1}{ \deg F'} \times 1 \textit{... (3) and (4)}\\
&\ &   + \left( 1 - \cfrac{1}{\deg F} - \cfrac{1}{\deg F'} - \cfrac{i \# \Gamma (F, F')}{\deg F} - \cfrac{\# \Gamma (F, F')}{\deg F} \right) \times 3 \hspace{\fill} \textit{... (5)}\\
&=& \left( 1 - \cfrac{1}{\deg F} - \cfrac{1}{\deg F'} - \cfrac{i \# \Gamma (F, F')}{\deg F'} - \cfrac{\# \Gamma (F, F')}{\deg F}  \right) \\
&\ & + \left( 1 - \cfrac{1}{\deg F} - \cfrac{1}{\deg F'} - \cfrac{i \# \Gamma (F, F')}{\deg F'} - \cfrac{\# \Gamma (F, F')}{\deg F'}  \right)\\
&\ & -\cfrac{\# \Gamma (F, F')}{\deg F'} + 1
\end{eqnarray*}
which implies that
\begin{eqnarray*}
\kappa (F, F') &\geq& - \cfrac{1}{i+1} \left( 1 - \cfrac{1}{\deg F} - \cfrac{1}{\deg F'} - \cfrac{i \# \Gamma (F, F')}{\deg F'} - \cfrac{\# \Gamma (F, F')}{\deg F}  \right) \\
&\ & -\cfrac{1}{i+1} \left( 1 - \cfrac{1}{\deg F} - \cfrac{1}{\deg F'} - \cfrac{i \# \Gamma (F, F')}{\deg F'} - \cfrac{\# \Gamma (F, F')}{\deg F'}  \right)\\
&\ & +\cfrac{\# \Gamma (F, F')}{(i+1)\deg F'}
\end{eqnarray*}

{\bf Case 2.} $A \leq 0 \leq B$.

Our transfer plan for moving $m_{F}$ to $m_{F'}$ should be as follows:
	\begin{enumerate}
	\item Move the mass of $1/((i+1)\deg F)$ from $F'$ to $i$-faces in $\Gamma^{\mathrm{own}} (F')$. This is well defined because we have $0 \leq B$.
	\item Move the mass of $1 / ((i+1)\deg F')$ to itself at $i$-face in $\Gamma (F, F')$.
	\item Move the mass of $1 / ((i+1)\deg F')$ from a face $F \in \Gamma (F, \Gamma(F, F'))$ to $\phi (F) \in \Gamma (F', \Gamma(F, F'))$.
	\item Move all mass of $1/((i+1)\deg F')$ at $i$-faces in $\Gamma^{\mathrm{own}} (F)$, and the rest at $i$-faces in $\Gamma (F, \Gamma(F, F'))$ to $F$. This is well defined because we have 
	\begin{eqnarray*}
	\cfrac{1}{\deg F'} \geq 1 - \cfrac{1}{\deg F} - \cfrac{\# \Gamma (F, F')}{\deg F} - \cfrac{i \# \Gamma (F, F')}{\deg F} + \left( \cfrac{i \# \Gamma (F, F')}{\deg F} - \cfrac{i \# \Gamma (F, F')}{\deg F'} \right),
	\end{eqnarray*}
	which implies $A \leq 0$.
	\item Move the rest $1 / ((i+1)\deg F) - 1 / ((i+1)\deg F')$ from an $i$-face in $\Gamma (F, F')$ to $i$-faces in $\Gamma^{\mathrm{own}} (F')$.
	\end{enumerate}
Then, by using this transfer plan and calculating the 1-Wasserstein distance between $m_{F}$ and $m_{F'}$, we obtain
\begin{eqnarray*}
(i + 1)W(m_{F}, m_{F'}) &\leq& +  \cfrac{1}{ \deg F'} \times 1 \hfill \textit{... (1)}\\
&\ & + \cfrac{1}{\deg F'} \times i \# \Gamma (F, F') \times 1  \hfill \textit{...(3)}\\\\
&\ & + \cfrac{1}{ \deg F'} \times 1  \hfill \textit{...(4) and (5)}\\
&\ & + \left( 1 - \cfrac{1}{\deg F} - \cfrac{1}{\deg F'} - \cfrac{i \# \Gamma (F, F')}{\deg F'} - \cfrac{\# \Gamma (F, F')}{\deg F'} \right) \times 2  \hfill \textit{...(4) and (5)}\\
&=& \left( 1 - \cfrac{1}{\deg F} - \cfrac{1}{\deg F'} - \cfrac{i \# \Gamma (F, F')}{\deg F'} - \cfrac{\# \Gamma (F, F')}{\deg F'}  \right) -\cfrac{\# \Gamma (F, F')}{\deg F'} + 1
\end{eqnarray*}
which implies that
\begin{eqnarray*}
\kappa (F, F') &\geq& -\cfrac{1}{i+1} \left( 1 - \cfrac{1}{\deg F} - \cfrac{1}{\deg F'} - \cfrac{i \# \Gamma (F, F')}{\deg F'} - \cfrac{\# \Gamma (F, F')}{\deg F'}  \right) +\cfrac{\# \Gamma (F, F')}{(i+1)\deg F'}
\end{eqnarray*}

{\bf Case 3.} $A \leq B \leq 0$.

Our transfer plan for moving $m_{F}$ to $m_{F'}$ should be as follows:
	\begin{enumerate}
	\item Move the mass of $1 / ((i+1)\deg F')$ to itself at $i$-face in $\Gamma (F, F')$, and move the rest to $F$.
	\item Move $1/((i+1)\deg F')$ from $F'$ to $i$-faces in $\Gamma^{\mathrm{own}} (F')$. This is well defined because we have
	\begin{eqnarray*}
	 \cfrac{1}{\deg F} \geq 1 - \cfrac{1}{\deg F'} - \cfrac{i \# \Gamma (F, F')}{\deg F'} - \cfrac{\# \Gamma (F, F')}{\deg F'}
	\end{eqnarray*}
	which implies $B \leq 0$. Then, move the rest to $F$.
	\item Move all mass of $1/((i+1)\deg F')$ at $i$-faces in $\Gamma^{\mathrm{own}} (F)$, and the rest at $i$-faces in $\Gamma (F, \Gamma(F, F'))$ to $F$. This is well defined because we have 
	\begin{eqnarray*}
	\cfrac{1}{\deg F'} \geq 1 - \cfrac{1}{\deg F} - \cfrac{\# \Gamma (F, F')}{\deg F} - \cfrac{i \# \Gamma (F, F')}{\deg F} + \left( \cfrac{i \# \Gamma (F, F')}{\deg F} - \cfrac{i \# \Gamma (F, F')}{\deg F'} \right),
	\end{eqnarray*}
	which implies $A \leq 0$.
	\item Move the mass of $1 / ((i+1)\deg F')$ from a face $F \in \Gamma (F, \Gamma(F, F'))$ to $\phi (F) \in \Gamma (F', \Gamma(F, F'))$, and move the rest to $F$.
	\end{enumerate}
Then, by using this transfer plan and calculating the 1-Wasserstein distance between $m_{F}$ and $m_{F'}$, we have
\begin{eqnarray*}
(i + 1)W(m_{F}, m_{F'}) &\leq& \left( 1 - \cfrac{\# \Gamma (F, F')}{(i+1)\deg F'}  \right) \times 1
\end{eqnarray*}
which implies that
\begin{eqnarray*}
\kappa (F, F') &\geq& \cfrac{\# \Gamma (F, F')}{(i+1)\deg F'}
\end{eqnarray*}

Then, this completes the proof.
\end{proof}

\begin{thm}\label{upper}
Let $F$ and $F'$ be $i$faces with $F \sim F'$. We have
\begin{eqnarray*}
\kappa (F, F') \leq \cfrac{\# \Gamma (F, F')}{(i+1) (\deg F \vee \deg F')}
\end{eqnarray*}
\end{thm}
\begin{rem}
For $i = 0$, this statement corresponds to Theorem 4 in \cite{Jo2}. Moreover, this theorem is proved in the same manner as Theorem 4 in \cite{Jo2}.
\end{rem}

Regarding the lower bound of the Ricci curvature, the following lemma implies that considering the Ricci curvature of connected faces is sufficient, although the Ricci curvature is defined for any pair of faces.
\begin{prop}[Lin-Lu-Yau \cite{Yau1}]\label{LLY1}
If $\kappa(F, F') \geq k$ for any connected faces $F \sim F'$ and for a real number $k$, then $\kappa(F, F') \geq k$ for any pair of faces $(F, F') \in S_i \times S_i$.
\end{prop}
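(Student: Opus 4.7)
The plan is to extend the curvature bound from connected pairs to arbitrary pairs via a chaining argument along a shortest path, using the fact that the $1$-Wasserstein distance satisfies the triangle inequality. So take two arbitrary faces $F, F' \in S_i(K)$ and let $d = d(F, F')$. Choose a shortest path $F = F_0 \sim F_1 \sim \cdots \sim F_d = F'$, which exists because $K$ is connected (otherwise the statement is vacuous for faces in different components, or one works within a component).

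The first step is to bound $W(m_F^\epsilon, m_{F'}^\epsilon)$ by a telescoping sum. By the triangle inequality for the Wasserstein distance,
\begin{eqnarray*}
W(m_F^\epsilon, m_{F'}^\epsilon) \leq \sum_{j=0}^{d-1} W(m_{F_j}^\epsilon, m_{F_{j+1}}^\epsilon).
\end{eqnarray*}
Because each consecutive pair is connected, $d(F_j, F_{j+1}) = 1$, and by the definition of $\kappa_\epsilon$,
\begin{eqnarray*}
W(m_{F_j}^\epsilon, m_{F_{j+1}}^\epsilon) = 1 - \kappa_\epsilon(F_j, F_{j+1}).
\end{eqnarray*}
Dividing both sides of the triangle inequality by $d(F, F') = d$ and rearranging gives
\begin{eqnarray*}
\kappa_\epsilon(F, F') = 1 - \frac{W(m_F^\epsilon, m_{F'}^\epsilon)}{d} \geq \frac{1}{d}\sum_{j=0}^{d-1} \kappa_\epsilon(F_j, F_{j+1}).
\end{eqnarray*}

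The final step is to divide through by $\epsilon$ and let $\epsilon \to 0$; Lemma \ref{concave} and Lemma \ref{bounded} guarantee that each limit $\kappa(F_j,F_{j+1}) = \lim_{\epsilon \to 0}\kappa_\epsilon(F_j,F_{j+1})/\epsilon$ exists and likewise for $\kappa(F,F')$. This yields
\begin{eqnarray*}
\kappa(F, F') \geq \frac{1}{d}\sum_{j=0}^{d-1}\kappa(F_j, F_{j+1}) \geq \frac{1}{d} \cdot d \cdot k = k,
\end{eqnarray*}
using the hypothesis that each consecutive pair satisfies $\kappa(F_j, F_{j+1}) \geq k$.

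I do not anticipate a real obstacle here: the argument is standard and depends only on the triangle inequality for $W$ (which is automatic, as $W$ is a metric on probability measures, and the coupling that proves it can be constructed by gluing optimal couplings along the path) together with the existence of the limit defining $\kappa$. The only thing to be slightly careful about is that the triangle-inequality inequality above is already valid at each fixed $\epsilon$, so no interchange of limits is needed; one simply divides by $\epsilon$ and passes to the limit termwise in a finite sum.
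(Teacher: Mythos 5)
Your argument is correct and is essentially the same as the paper's, which simply defers to Lemma 2.3 of Lin--Lu--Yau; that lemma is proved by exactly this chaining of the Wasserstein triangle inequality along a shortest path, followed by dividing by $\epsilon$ and passing to the limit termwise. No gaps: consecutive faces on a shortest path are distinct and at distance $1$, so each $\kappa_\epsilon(F_j,F_{j+1})$ is well defined and the telescoping bound goes through.
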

This proposition is proved in the same manner as Lemma 2.3 in \cite{Yau1}.

\section{Estimate of the eigenvalues of the Laplacian by the Ricci curvature}
In this section, we discuss the relation between the eigenvalue of the normalized $i$-up Laplacian and the Ricci curvature. 
To prove Theorem \ref{thm:estimate}, we have stated a few definitions and theorems.
\begin{defn}
Let $K$ be an $(i+1)$-path connected simplicial complex. 
\begin{enumerate}
\item $K$ is {\em orientable} if an orientation exists on the $(i+1)$-faces of $K$, such that for any $(i+1)$-faces $F$ satisfying $E, E' \in \partial F$, where $E$ and $E'$ are $i$-faces, $\mathrm{sgn}([E], \partial [F]) \mathrm{sgn}([E'], \partial [F]) = 1$ holds.
\item $K$ is {\em opposite orientable} if an orientation exists on the $(i+1)$-faces of $K$, such that for any $(i+1)$-faces $F$ satisfying $E, E' \in \partial F$, where $E$ and $E'$ are $i$-faces, $\mathrm{sgn}([E], \partial [F]) \mathrm{sgn}([E'], \partial [F]) = -1$ holds.
\end{enumerate}
\end{defn}

\begin{rem}
For graphs, a constant function is an eigenfunction of the eigenvalue $0$; therefore, for $i \geq 1$, we consider the eigenvalues of the normalized $i$-up Laplacian. If $K$ is orientable, then a constant function is an eigenfunction of the eigenvalue $i+2$. In contrast, if $K$ is opposite orientable, no eigenvalue exists such that the eigenfunction is a constant function.
\end{rem}

Before we prove Theorem \ref{thm:estimate}, we present the statement again:\\
{\bf Theorem \ref{thm:estimate}} Let $K$ be an orientable $(i+1)$-path-connected simplicial complex and $\lambda$ be the eigenvalue of $\Delta^\mathrm{up}_i$, except for $i+2$. Assume that $\kappa (F, F') \geq k$ for any $F \sim F'$ and for a real number $k$. Then, we have
\begin{eqnarray*}
(i+1)k - i \leq \lambda \leq (i+2) - (i+1) k.
\end{eqnarray*}

\begin{proof}[Proof of Theorem \ref{thm:estimate}]
Let $f$ be an eigenfunction with respect to $\lambda$. Based on this assumption, $f$ is not a constant function; hence, by scaling $f$, if necessary,
	\begin{eqnarray}\label{lip}
	\sup_{F, F' \in S_i (K)}\cfrac{|f([F]) - f([F'])|}{d(F, F')} = 1.
	\end{eqnarray}
We fix any two $i$faces $F, F' \in S_i(K)$, and we have $(1 - \kappa (F, F'))d(F,F') = W(m_{F}, m_{F'})$. Based in the definition of the probability measure on $S_i$, $\Delta^{\mathrm{up}}_{i}$ is represented as follows:
	\begin{eqnarray}\label{measure}
	(\Delta^{\mathrm{up}}_{i} f) (F)= f([F]) + (i+1)\sum_{F'' \in S_i} m_F ([F'']) f(F'') = \lambda f([F])
	\end{eqnarray}
	Then, using the Kantorovich duality and Eq. \eqref{measure}, we calculate the Wasserstein distance.
	\begin{eqnarray*}
	|(1 - \kappa (F, F')) d(F,F')|  &=& W(m_{F}, m_{F'})\\
	&\geq& | \sum_{F'' \in S_i (K)} f([F'']) (m_{F}^{\epsilon}(F'')- m_{F'}^{\epsilon}(F''))|, \\
	&=& \left| \cfrac{\lambda - 1}{i+1} \right| |f([F]) - f([F'])|
	\end{eqnarray*}
Thus, based on the assumption and Eq. \eqref{lip}, we obtain
\begin{eqnarray*}
1 - k \geq \cfrac{1}{i+1}| \lambda - 1|
\end{eqnarray*}
which implies that
\begin{eqnarray*}
(i+1)k - i \leq \lambda \leq (i+2) - (i+1) k.
\end{eqnarray*}
\end{proof}

\begin{cor}
\label{cor:estimate}
Let $K$ be an opposite orientable $(i+1)$-path-connected simplicial complex and $\lambda$ be the eigenvalue of $\Delta^\mathrm{up}_i$. Assume that $\kappa (F, F') \geq k$ for any $F \sim F'$ and for a real number $k$. Then, we have
\begin{eqnarray*}
(i+1)k - i \leq \lambda \leq (i+2) - (i+1) k.
\end{eqnarray*}
\end{cor}
$\Delta^{\mathrm{up}}_{i}$ is represented as follows:
	\begin{eqnarray}\label{measure}
	(\Delta^{\mathrm{up}}_{i} f) (F)= f([F]) - (i+1)\sum_{F'' \in S_i} m_F ([F'']) f(F'') = \lambda f([F])
	\end{eqnarray}
The proof is the same as Theorem \ref{thm:estimate}.
\begin{rem}
Because any connected graph is opposite orientable, the eigenvalue of the normalized graph Laplacian is
\begin{eqnarray*}
k \leq \lambda \leq 2-k.
\end{eqnarray*}
Thus, Corollary \ref{cor:estimate} is a generalization of Theorem \ref{Ol}. 
\end{rem}

\section{Example}
\begin{ex}
\label{ex:tetra}
If we consider an $(i+1)$-face $K$ (for $i=2$, see Figure \ref{fig:example}), then for $F, F' \in S_i (K)$, the probability measure is
\begin{eqnarray*}
m_F (F') =
\begin{cases}
\cfrac{1}{i+1},& \textit{if $F \sim F'$},\\
0,& \textit{otherwise}.
\end{cases}
\end{eqnarray*}
and the Ricci curvature is
\begin{eqnarray*}
\kappa (F, F') = \cfrac{i}{i+1},\  \text{for any}\ F \sim F'.
\end{eqnarray*}
Thus, by Theorem \ref{thm:estimate}, we obtain
\begin{eqnarray*}
0 \leq \lambda \leq 2.
\end{eqnarray*}
In contrast, in \cite{Ho}, the eigenvalues are $0$ and $i+2$; thus, the value corresponds to the lower bound of our result.
\end{ex}

\begin{ex}
\label{ex2}
We consider $K_1 = \left\{v_0, v_1, v_2, v_3 \right\}$ with $S_2(K_1) = \left\{ \left\{v_0, v_1, v_2 \right\}, \left\{v_1, v_2, v_3 \right\} \right\}$. (see Figure \ref{fig:example}). From Theorem \ref{lower} and Theorem \ref{upper}, for any $F, F' \in S_1 (K_1) \setminus \left\{ \left\{v_1, v_2 \right\} \right\}$, we obtain
\begin{eqnarray*}
\kappa (\left\{v_1, v_2 \right\}, F) &=& \cfrac{1}{4},\\
\kappa (F, F') &=& \cfrac{1}{2}.
\end{eqnarray*}

In contrast, $K_1$ is orientable because we orient $2$-faces and $1$-faces as follows:
\begin{eqnarray*}
[v_0, v_1, v_2],\ [v_1, v_2, v_3],
\end{eqnarray*}
\begin{eqnarray*}
[v_0, v_1],\ [v_1, v_2],\ [v_2, v_0],\ [v_2, v_3],\ [v_3, v_1].
\end{eqnarray*}
Thus, by Theorem \ref{thm:estimate}, we have
\begin{eqnarray*}
- \cfrac{1}{2} \leq \lambda \leq \cfrac{5}{2}.
\end{eqnarray*}
\end{ex}

\begin{ex}
We consider $K_2 = \left\{v_0, v_1, v_2, v_3, v_4, v_5 \right\}$ with 
\begin{eqnarray*}
S_2 (K_2) =\left\{
 \begin{tabular}{l}
 $\left\{v_0, v_1, v_2 \right\}, \left\{v_0, v_2, v_3 \right\}, \left\{v_0, v_3, v_4 \right\}, \left\{v_0, v_4, v_1 \right\}$,\\
  $\left\{v_5, v_1, v_2 \right\}, \left\{v_5, v_2, v_3 \right\}, \left\{v_5, v_3, v_4 \right\}, \left\{v_5, v_4, v_1 \right\}$
  \end{tabular}
  \right\}.
\end{eqnarray*}
From Theorem \ref{lower} and Theorem \ref{upper}, for any $F, F' \in S_1 (K_2)$, we obtain
\begin{eqnarray*}
\kappa (F, F') &=& \cfrac{1}{4}.
\end{eqnarray*}

In contrast, $K_2$ is orientable because we orient $2$-faces and $1$-faces, similar to Example \ref{ex2}. Thus, by Theorem \ref{thm:estimate}, we have
\begin{eqnarray*}
- \cfrac{1}{2} \leq \lambda \leq \cfrac{5}{2}.
\end{eqnarray*}
\end{ex}
\begin{figure}[h]
        \begin{center}
          \includegraphics[scale=0.4]{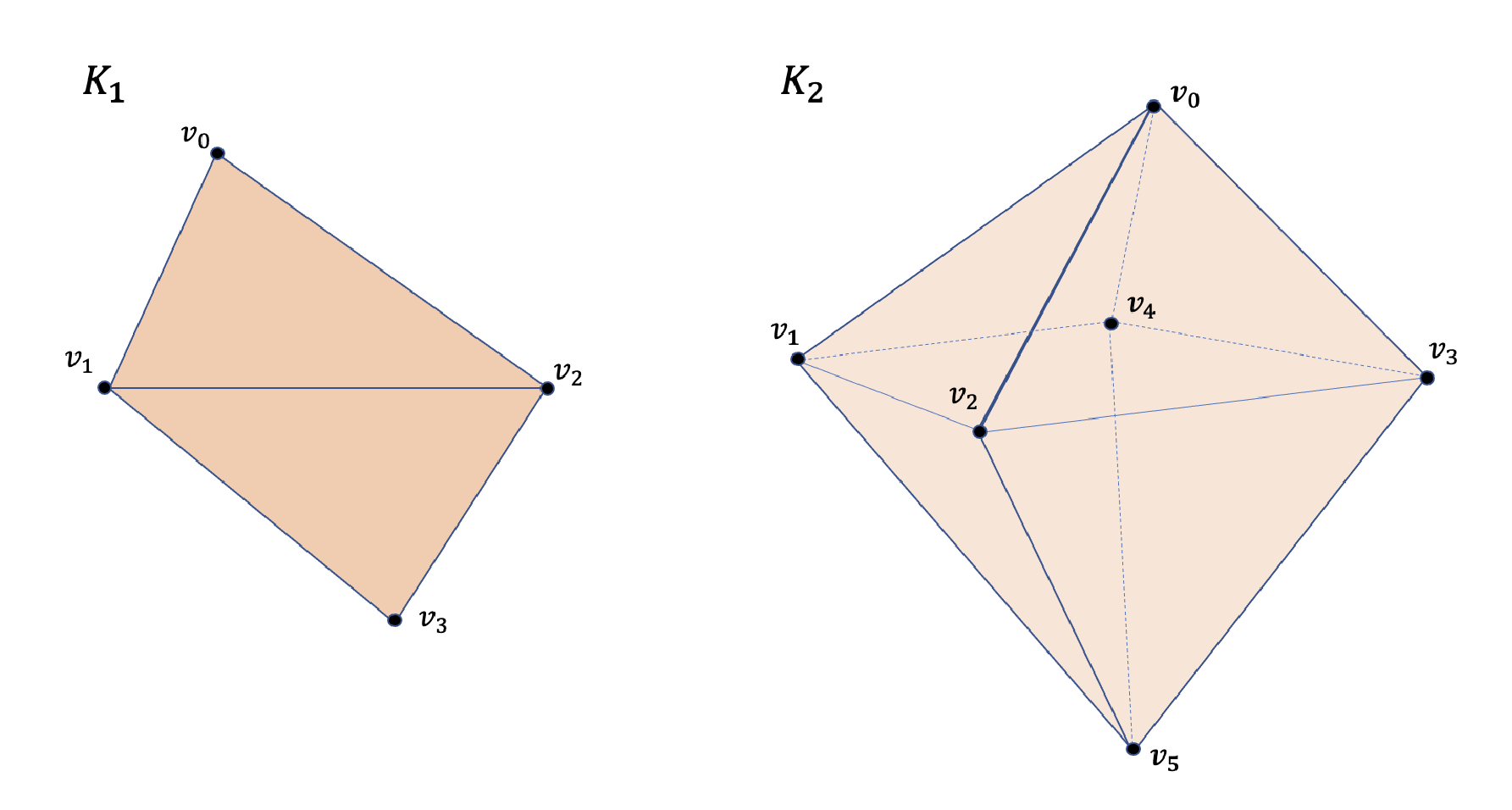}
\caption{Example of simplicial complexes}
    \label{fig:example}
  \end{center}
\end{figure}

\newpage

\section{Conclusion}
\begin{enumerate}
\item Theorem \ref{lower} and Theorem \ref{upper} are generalizations of the result in \cite{Jo2}. Based on these results, if we consider the Ricci curvature on simplicial complexes, the calculation is more complicated than that implemented using the graphs.
\item A primary result, namely Theorem \ref{thm:estimate}, is the most general estimate of the non-zero eigenvalue of $i$-Laplacian. However, this estimate may not be statistically significant.
\item To prove Theorem \ref{thm:estimate}, we assumed the orientation of faces. This differs from the undirected case; therefore, removing these conditions is essential. To do this, a different approach is required.
\item If the Ricci curvature is negative, our results have no meaning, and thus, we intend to improve our results.
\end{enumerate}

\newpage
\begin{bibdiv}
 \begin{biblist}
\fontsize{11pt}{0pt}\selectfont

\bib{Jo0}{article}{
  title={Ollivier-Ricci curvature and the spectrum of the normalized graph Laplace operator},
  author={Bauer, Frank},
  author={Jost, J{\"u}rgen},
  author={Liu, Shiping},
  journal={Math. Res. Lett.},
  volume={19},
  date={2012},
  number={6},
  pages={1185-1205}
}

\bib{Jo1}{article}{
  title={Bipartite and neighborhood graphs and the spectrum of the normalized graph Laplacian},
  author={Bauer, Frank},
  author={Jost, J{\"u}rgen},
  journal={Commun. Anal. Geom.},
  date={2009},
  volume={21},
  number={4},
  pages={787--845}  
}

\bib{Bo}{article}{
   author={Bottema, O.},
   title={\"{U}ber die Irrfahrt in einem Stra\ss ennetz},
   journal={Math. Z.},
   volume={39},
   date={1935},
   number={1},
   pages={137--145},
   issn={0025-5874},
}

\bib{Eck}{article}{
   author={Eckmann, Beno},
   title={Harmonische Funktionen und Randwertaufgaben in einem Komplex},
   journal={Comment. Math. Helv.},
   volume={17},
   date={1945},
   pages={240--255},
   issn={0010-2571},
}

\bib{Ei}{article}{ 
	author = {M. Eidi},
	author = {J. Jost},
	title ={Ollivier Ricci curvature of directed hypergraphs}
	journal = {Sci. Rep.},
	volume={10},
	date = {2020},
	number ={1}
	}

\bib{Ho}{article}{
  title={Spectra of combinatorial Laplace operators on simplicial complexes},
  author={Horak, Danijela}
  author={Jost, J{\"u}rgen},
  journal={Adv. Math.},
  volume={244},
  pages={303--336},
  year={2013},
}

\bib{Bobo}{article}{
  title={Graphs with large girth and nonnegative curvature dimension condition},
  author={Hua, Bobo},
  author={Lin, Yong},
  journal={Commn. Anal. Geom.},
  volume={27},
  date={2019},
  pages={619--638}
 }

\bib{Jo2}{article}{
   author={Jost, J\"{u}rgen},
   author={Liu, Shiping},
   title={Ollivier's Ricci curvature, local clustering, and curvature-dimension inequalities on graphs},
   journal={Discrete Comput. Geom.},
   volume={51},
   date={2014},
   number={2},
   pages={300--322},
   issn={0179-5376},
}

\bib{Kir}{article}{
   author = {{Kirchhoff}, G.},
   title = {Ueber die Aufl{\"o}sung der Gleichungen, auf welche man bei der Untersuchung der linearen Vertheilung galvanischer Str{\"o}me gef{\"u}hrt wird},
   journal = {Ann. Phys.},
   date = {1847},
   volume = {148},
   pages = {497--508},
}

\bib{Le}{book}{
   author={Levin, David A.},
   author={Peres, Yuval},
   author={Wilmer, Elizabeth L.},
   title={Markov chains and mixing times},
   publisher={American Mathematical Society, Providence, RI},
   date={2009},
   pages={xviii+371},
}

\bib{Yau1}{article}{
   author={Lin, Yong},
   author={Lu, Linyuan},
   author={Yau, Shing-Tung},
   title={Ricci curvature of graphs},
   journal={Tohoku Math. J. (2)},
   volume={63},
   date={2011},
   number={4},
   pages={605--627},
   issn={0040-8735},
}

\bib{Yau2}{article}{
   author={Lin, Yong},
   author={Yau, Shing-Tung},
   title={Ricci curvature and eigenvalue estimate on locally finite graphs},
   journal={Math. Res. Lett.},
   volume={17},
   date={2010},
   number={2},
   pages={343--356},
   issn={1073-2780},
}

\
\bib{Newman}{article}{
   author={Newman, Ilan},
   author={Rabinovich, Yuri},
   title={On multiplicative $\lambda$-approximations and some geometric
   applications},
   conference={
      title={Proceedings of the 23rd Annual ACM-SIAM Symposium on
      Discrete Algorithms},
   },
   book={
      publisher={ACM, New York},
   },
   date={2012},
   pages={51--67},
}

\bibitem{Ni} C.-C. Ni, Y.-Y. Lin, J. Gao, D. Gu, E. Saucan, {\em Ricci curvature of the Internet topology}, Proceedings of the IEEE Conference on Computer Communications, INFOCOM 2015, IEEE Computer Society (2015)

\bib{Ol1}{article}{
   author={Ollivier, Yann},
   title={Ricci curvature of Markov chains on metric spaces},
   journal={J. Funct. Anal.},
   volume={256},
   date={2009},
   number={3},
   pages={810--864},
   issn={0022-1236},
}

\bib{Ol2}{article}{
   author={Ollivier, Yann},
   title={A survey of Ricci curvature for metric spaces and Markov chains},
   conference={
      title={Probabilistic approach to geometry},
   },
   book={
      series={Adv. Stud. Pure Math.},
      volume={57},
      publisher={Math. Soc. Japan, Tokyo},
   },
   date={2010},
   pages={343--381},
   review={\MR{2648269}},
}

\bib{YSO1}{article}{
	title={Heat flow and concentration of measure on directed graphs with a lower Ricci curvature bound},
	author ={Sakurai, Yohei},
	author ={Ryunosuke, Ozawa},
	author ={Yamada, Taiki}
	journal={arXiv:2011.11418.},
	year={2021}
}

\bib{Tan}{article}{
  title={Graph curvature for differentiating cancer networks},
  author={R. Sandhu},
  author={T. Georgiou},
  author={E. Reznik},
  author={L. Zhu},
  author={I. Kolesov},
  author={Y. Senbabaoglu},
  author={A. Tannenbaum},
  journal={Sci. Rep.},
  date={2015},
  volume={5}
}

\bibitem{Vi1}C. Villani, {\em Topics in Mass Transportation}, Graduate Studies in Mathematics, Amer. Mathematical Society {\bf 58} (2003).

\bib{Vi2}{book}{
   author={Villani, C\'{e}dric},
   title={Optimal transport, Old and new},
   series={Grundlehren der Mathematischen Wissenschaften [Fundamental
   Principles of Mathematical Sciences]},
   volume={338},
   publisher={Springer-Verlag, Berlin},
   date={2009},
   pages={xxii+973},
   isbn={978-3-540-71049-3}
}

\bib{Y1}{article}{
  title={The Ricci curvature on directed graphs},
  author={Yamada, Taiki},
  journal={J. Korean Math. Soc.},
  year={2019},
  volume={56},
  numbers={1},
  pages={112--125}
}

\bib{Yamada3}{article}{
  title={An estimate of the first non-zero eigenvalue of the Laplacian by the Ricci curvature on edges of graphs},
  author={Yamada, Taiki},
  journal={Osaka J. Math. , in printing},
  year={2018}
}

 \end{biblist}
\end{bibdiv}

\end{document}